\numberwithin{equation}{section}
\numberwithin{figure}{section}
\theoremstyle{plain}
\newtheorem{thm}{\protect\theoremname}
  \theoremstyle{plain}
  \newtheorem{lem}[thm]{\protect\lemmaname}
  \theoremstyle{remark}
  \newtheorem{rem}[thm]{\protect\remarkname}
  \theoremstyle{definition}
  \newtheorem{defn}[thm]{\protect\definitionname}
  \theoremstyle{plain}
  \newtheorem{prop}[thm]{\protect\propositionname}
\newenvironment{SChinese}{%
\CJKfamily{gbsn}%
\CJKtilde
\CJKnospace}{}
  \providecommand{\definitionname}{Definition}
  \providecommand{\lemmaname}{Lemma}
  \providecommand{\propositionname}{Proposition}
  \providecommand{\remarkname}{Remark}
\providecommand{\theoremname}{Theorem}
\begin{document}

\title{$p$-adic quantum hyperenveloping algebras for $\mathfrak{sl}_{2}$.}
\begin{abstract}
We construct an example of quantum hyperenveloping algebra over discretely
valued field for the Lie algebra $\mathfrak{sl}_{2}$. 
\end{abstract}

\author{Anton Lyubinin}

\email{anton@ustc.edu.cn, anton@lyubinin.kiev.ua}

\address{Department of Mathematics, School of Mathematical Sciences, \begin{CJK}{UTF8}{}\begin{SChinese}
中国科学技术大学
\end{SChinese}
\end{CJK}University of Science and Technology of China, Hefei, Anhui, People's
Republic of China}

\maketitle
\tableofcontents{}

\global\long\def\emb{\hookrightarrow}
\global\long\def\from{\leftarrow}
\global\long\def\lfr{\leftarrow}
\global\long\def\cten{\widehat{\otimes}}
\global\long\def\uq#1{U_{q}\left(\mathfrak{#1}\right)}
\global\long\def\mf#1{\mathfrak{#1}}
\global\long\def\uqk#1#2{U_{q}\left(\mathfrak{#1}_{#2}\right)}
\global\long\def\norm#1{\left\Vert #1\right\Vert }
\newcommandx\uqr[3][usedefault, addprefix=\global, 1=L]{U_{q}\left(\mathfrak{#2}_{#1},\,#3\right)}
\global\long\def\huq#1#2{\widehat{U}_{q}\left(\mathfrak{#1}_{#2}\right)}

The study of quantum groups and deformations in $p$-adic setting
was proposed in \cite{S}. In this paper we work out the basic example
of quantum hyperenveloping algebra $\huq{sl}{2,L}$ over discretely
valued field $L$ for the Lie algebra $\mathfrak{sl}_{2}$. Our construction
is slightly different from the one in \cite{S} and the use of quantum
doubles allows us to simplify certain proofs. The general case of
finite-dimensional semisimple Lie algebra $\mathfrak{g}$ will be
treated in \cite{L}.

In our construction for $\mathfrak{sl}_{2}$ we use algebras of iterated
skew-commutative power series, which, unlike their polynomial and
formal power series counterparts, seem not to have been studied in
the literature. They possess properties, similar to the ones of Tate
algebras and in our treatment of them we mostly follow \cite{FVDP}.
We believe that these algebras can be of some interest in addition
to this work.

Throughout this paper the base field is denoted by $L$ and a norm
$\norm{\cdot}_{V}$ on an $L$-vector space $V$ is always supposed
to be solid (i.e. $\norm V_{V}\subset\left|L\right|_{L}$) and non-archimedean
(i.e. satisfying strong triangle inequality).

\section{Preliminaries.}

Here we will recall some notions, mostly from the theory of quantum
groups. There are numerous references on that subject, we will use
\cite{KSch}. For any unknown notation in this section one should
look in {[}loc.cit.{]}.

\subsection{\label{sub:.uq}$\uqk{sl}{2,L}$.}

The quantum enveloping algebra $\uqk{sl}{2,L}$ \cite[3.1.1]{KSch}
is the associative algebra over the field $L\left(q\right)$ with
generators $E,\mbox{ }F,\mbox{ }K\mbox{ and }K^{-1}$ and the following
relations 
\begin{equation}
\begin{array}{c}
K\cdot K^{-1}=K^{-1}\cdot K=1\mbox{ ,}\\
KE=q^{2}EK\mbox{ ,}\\
KF=q^{-2}FK\mbox{ ,}\\
EF-FE={\displaystyle \frac{K-K^{-1}}{q-q^{-1}}}\mbox{ .}
\end{array}\label{eq:sl2relations}
\end{equation}

The set $\left\{ F^{l}K^{m}E^{n}\right\} $, $n,l>0$ and $m\in\mathbb{Z}$,
is a basis for $\uqk{sl}{2,L}$.

The Hopf algebra structure on $U_{q}\left(\mathfrak{sl}_{2,L}\right)$
is given by comultiplication
\begin{equation}
\begin{array}{c}
\Delta\left(E\right)=E\otimes K+1\otimes E\\
\Delta\left(F\right)=F\otimes1+K^{-1}\otimes F\\
\Delta\left(K\right)=K\otimes K
\end{array},\label{eq:sl2comultiplication}
\end{equation}
 counit 
\begin{equation}
\begin{array}{ccc}
\epsilon\left(K\right)=1, & \epsilon\left(F\right)=0, & \epsilon\left(E\right)=0,\end{array}\label{eq:sl2counit}
\end{equation}
and antipode 
\begin{equation}
\begin{array}{ccc}
S\left(F\right)=-KF, & S\left(E\right)=-EK^{-1}, & S\left(K\right)=K^{-1}\end{array}.\label{eq:sl2antipode}
\end{equation}
$U_{q}\left(\mathfrak{sl}_{2,L}\right)$ is neither commutative nor
cocommutative. We denote by $\uqk hL$, $\uqk b{L,-}$ and $\uqk b{L,+}$
the subalgebras of $\uqk{sl}{2,L}$ generated by $\left\{ K^{\pm1}\right\} $,
$\left\{ K^{\pm1},F\right\} $ and $\left\{ K^{\pm1},E\right\} $
respectively. One can see that $\uqk hL$, $\uqk b{L,-}$ and $\uqk b{L,+}$
are Hopf subalgebras.

\subsection{Quantum doubles}

We also recall the definition of the quantum double $D\left(A,B,\sigma\right)$
\cite[8.2.1]{KSch} of two skew-paired bialgebras $A$ and $B$. Recall
that $\sigma:A\times B\to L$ is a skew-pairing if $\sigma\left(\cdot,\cdot\right)$
is a dual pairing of bialgebras $A$ and $B^{op}$. We say that $\sigma$
is (convolution) invertible if there exists $\bar{\sigma}:A\times B\to L$
s.t. $\sigma\star\bar{\sigma}=\bar{\sigma}\star\sigma=\epsilon_{A}\otimes\epsilon_{B}$,
i.e. 
\[
\sigma\left(a_{\left(1\right)},b_{\left(1\right)}\right)\bar{\sigma}\left(a_{\left(2\right)},b_{\left(2\right)}\right)=\bar{\sigma}\left(a_{\left(1\right)},b_{\left(1\right)}\right)\sigma\left(a_{\left(2\right)},b_{\left(2\right)}\right)=\epsilon_{A}\left(a\right)\epsilon_{B}\left(b\right).
\]
For Hopf algebras $A$ (or $B$) is a Hopf algebra (with invertible
antipode) than any $\sigma$ is invertible with $\bar{\sigma}\left(a,b\right)=\sigma\left(S_{A}\left(a\right),b\right)$
(resp. $\bar{\sigma}\left(a,b\right)=\sigma\left(a,S_{B}^{-1}\left(b\right)\right)$)
for $a\in A$, $b\in B$.

For bialgebras $A$ and $B$ with an invertible skew-pairing $\sigma$
the quantum double $D\left(A,B,\sigma\right)$ (or simply $D\left(A,B\right)$
when the choice of $\sigma$ is clear) is defined as the $L$-algebra
$B\otimes A$ with the new multiplication 
\begin{equation}
\left(b\otimes a\right)\left(b'\otimes a'\right)=\sum bb_{\left(2\right)}'\bar{\sigma}\left(a_{\left(1\right)},b_{\left(1\right)}'\right)\otimes a'a_{\left(2\right)}\sigma\left(a_{\left(3\right)},b_{\left(3\right)}'\right)\label{eq:double_product}
\end{equation}
 for $a,a'\in A$ and $b,b'\in B$. With tensor product coalgebra
structure of $B\otimes A$, $D\left(A,B,\sigma\right)$ is a bialgebra.
If $A$ and $B$ are Hopf algebras, then $D\left(A,B,\sigma\right)$
is also a Hopf algebra with the antipode 
\[
S\left(b\otimes a\right)=\left(1\otimes S\left(a\right)\right)\left(S\left(b\right)\otimes1\right).
\]

Similarly one can define the double $D\left(A,B,\sigma\right)$ for
two Banach or topological bialgebras (Hopf algebras), using topological
(complete) tensor product and assuming that $\sigma$ is continuous.

One has a unique pairing of Hopf algebras $\left\langle \cdot,\cdot\right\rangle :\uqk b{L,+}\times\uqk b{L,-}^{op}\to L$
\cite[6.3.1]{KSch} s.t. 
\begin{equation}
\left\langle K,K\right\rangle =q^{-2},\;\left\langle K,F\right\rangle =\left\langle E,K\right\rangle =0,\;\left\langle E,F\right\rangle =\left(q^{-1}-q\right)^{-1}.\label{eq:sl2bpbm_pairing}
\end{equation}
 The quantum double $D\left(\uqk b{L,+},\uqk b{L,-}\right):=D\left(\uqk b{L,+},\uqk b{L,-},\left\langle \cdot,\cdot\right\rangle \right)$
\cite[8.2.4]{KSch} is a Hopf algebra, which as an algebra is generated
by elements $E$, $K^{\pm1}$, $F$ and $K_{-}^{\pm1}$ with defining
relations of $\uqk b{L,+}$ and $\uqk b{L,-}$ and cross-relations
\begin{equation}
\begin{array}{c}
KK_{-}=K_{-}K,\:{\displaystyle EF-FE=\frac{K-K_{-}^{-1}}{q-q^{-1}}},\\
K_{-}EK_{-}^{-1}=q^{2}E,\: KFK^{-1}=q^{-2}F.
\end{array}\label{eq:double_relations}
\end{equation}
 The two-sided ideal $I_{q}$ of $D\left(\uqk b{L,+},\uqk b{L,-}\right)$,
generated by $K-K_{-}$, is a Hopf ideal and we have canonical isomorphism
of Hopf algebras 
\[
D\left(\uqk b{L,+},\uqk b{L,-}\right)/I_{q}\cong\uqk{sl}{2,L}.
\]

\subsection{\emph{Normed algebras.}}

Definitions of a normed algebra varies slightly in literature.

We will say that $A$ is a normed algebra over $L$, if $A$ is a
normed $L$-vector space with continuous multiplication. In other
words, the product on $A$ satisfy the inequality $\norm{ab}_{A}\leq C\norm a_{A}\norm b_{A}$
for all $a,b\in A$ and some $C\in\mathbb{R}$. We will also say that
$\norm{\cdot}_{A}$ is submultiplicative if we have $\norm{ab}_{A}\leq\norm a_{A}\norm b_{A}$
and multiplicative if $\norm{ab}_{A}=\norm a_{A}\norm b_{A}$. If
$A$ is complete we will say that $A$ is an $L$-Banach algebra.

Define $A^{0}=\left\{ x\in A|\left\Vert x\right\Vert _{A}\leq1\right\} $,
$A^{00}=\left\{ x\in A|\left\Vert x\right\Vert _{A}<1\right\} $ and
$\bar{A}=A^{0}/A^{00}$. $A^{0}$ and $A^{00}$ are closed $L^{0}-$submodules
of $A$ and $\bar{A}$ is a vector space over the residue field $\bar{L}=L^{0}/L^{00}$.
Denote the image of $f\in A^{0}$ in $\bar{A}$ by $\bar{f}.$

If the norm $\norm{\cdot}_{A}$ is submultiplicative, $A$ is a filtered
ring with the filtration 
\[
F_{r}A=\left\{ a\in A|\norm a_{A}\leq r\right\} 
\]
induced by norm. Since our norms are solid, this filtration is quasi-integral.
If $A$ is an $L$-Banach algebra, this filtration is complete. The
associated graded ring $Gr_{\cdot}A$ is a $Gr_{\cdot}L$-algebra,
with $Gr_{\cdot}L=\bar{L}[\bar{\pi},\bar{\pi}^{-1}]$ is the ring
of Laurent polynomials, where $\pi$ is a uniformizer of $L$. $Gr_{\cdot}A$
does not have zero-divisors iff $\norm{\cdot}_{A}$ is multiplicative.
If $Gr_{\cdot}A$ is left (right) noetherian then $A$ is left (right)
noetherian \cite[1.1]{ST33}.

\section{Skew-Tate algebras.}

Let $L$ be a discretely valued field and $A$ be a $L$-Banach algebra. 
\begin{lem}
\label{lem:Let-A-be} Let $B=A\left[x,\alpha,\delta\right]$ be an
Ore extension of A ($\alpha:A\to A$ automorphism of A and $\delta$
is an $\alpha-$derivation of A). Consider a ``Gauss R-norm'' on
B: $f\in B,$ $f=\sum f_{n}x^{n}$, then $\norm f_{R}\equiv\left\Vert f\right\Vert _{Gauss,R}=\max_{n}\left\Vert f_{n}\right\Vert _{A}R^{n}$
($R\in\left|L\right|_{L}\subset\mathbb{R}$). Suppose $\left\Vert \alpha\right\Vert \leq1,$
$\left\Vert \delta\right\Vert \leq1$ and $\left|R\right|\geq1.$
Then $\norm{\cdot}_{R}$ is a submultiplicative non-archimedean algebra
norm on B.\end{lem}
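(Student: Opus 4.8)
The plan is to verify the two required properties — the non-archimedean norm axioms and submultiplicativity — by direct computation, reducing the submultiplicative estimate to the case of monomials and then controlling the coefficients that arise when one commutes $x$ past an element of $A$. First I would recall that in $B = A[x,\alpha,\delta]$ the basic straightening rule is $xa = \alpha(a)x + \delta(a)$ for $a \in A$, and more generally $x^n a = \sum_{k=0}^n P_{n,k}(a)\, x^k$, where each $P_{n,k}$ is a sum of compositions of $\alpha$'s and $\delta$'s (the noncommutative analogue of a binomial-type expansion). Since $\norm{\alpha}\le 1$ and $\norm{\delta}\le 1$ and the norm on $A$ is submultiplicative, every such composite operator has operator norm $\le 1$, hence $\norm{P_{n,k}(a)}_A \le \norm{a}_A$ for all $n,k$.

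Next I would establish that $\norm{\cdot}_R$ is a non-archimedean norm on the $L$-vector space $B$: non-negativity, definiteness, homogeneity under scalars of $L$, and the strong triangle inequality $\norm{f+g}_R \le \max(\norm f_R, \norm g_R)$ all follow immediately from the corresponding properties of $\norm{\cdot}_A$ together with the fact that $\norm{f}_R = \max_n \norm{f_n}_A R^n$ is a (weighted) sup of non-archimedean norms; here one uses $R \in |L|_L$ so that the weights lie in the value group and the norm is again solid. This part is routine.

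For submultiplicativity it suffices, by the strong triangle inequality and homogeneity, to check $\norm{(a x^m)(b x^n)}_R \le \norm{a x^m}_R \norm{b x^n}_R = \norm a_A \norm b_A R^{m+n}$ for monomials $a x^m, b x^n$ with $a,b\in A$. Expanding, $(ax^m)(bx^n) = a\big(\sum_{k=0}^m P_{m,k}(b)x^k\big)x^n = \sum_{k=0}^m a P_{m,k}(b)\, x^{k+n}$, so
\[
\norm{(ax^m)(bx^n)}_R = \max_{0\le k\le m}\norm{a P_{m,k}(b)}_A R^{k+n} \le \max_{0\le k\le m}\norm a_A \norm b_A R^{k+n}.
\]
Because $R\ge 1$ we have $R^{k+n}\le R^{m+n}$ for $k\le m$, so the right-hand side is $\le \norm a_A\norm b_A R^{m+n}$, as desired. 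Continuity of multiplication is then automatic with constant $C=1$.

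The main obstacle is the bookkeeping for the operators $P_{n,k}$: one must confirm that, when $\alpha$ and $\delta$ do not commute, the coefficient $P_{n,k}$ appearing in $x^n a = \sum_k P_{n,k}(a) x^k$ is still an $L$-linear combination of $n$-fold composites of $\alpha$ and $\delta$ in which \emph{each individual summand} is a single composite (no extra scalar multipliers growing with $n$), so that the bound $\norm{P_{n,k}}\le 1$ genuinely holds. This is the standard fact that $P_{n+1,k} = \alpha\circ P_{n,k-1} + \delta\circ P_{n,k}$ together with $\alpha,\delta$ being norm-$\le 1$ operators and the ultrametric inequality, but it is the one place where a little care is needed; everything else is a direct consequence of the definition of $\norm{\cdot}_R$ and the hypotheses $\norm\alpha\le 1$, $\norm\delta\le 1$, $R\ge 1$.
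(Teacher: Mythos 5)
Your proof is correct and follows essentially the same route as the paper: both hinge on the induction $x^{n}a=\sum_{k}c_{nk}(a)x^{k}$ with $c_{nk}$ a sum of words in $\alpha$ and $\delta$ (hence of operator norm $\leq1$ by the ultrametric inequality), followed by the estimate $R^{k+n}\leq R^{m+n}$ for $k\leq m$ using $R\geq1$. Your reduction to monomials via the strong triangle inequality is just a slightly cleaner organization of the paper's direct double-sum expansion.
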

\begin{proof}
It is clear that $\norm{\cdot}_{R}$ is a $L-$vector space norm (as
in commutative case).

Lets prove submultiplicativity, i.e. $\left\Vert fg\right\Vert _{R}\leq\left\Vert f\right\Vert _{R}\left\Vert g\right\Vert _{R}.$

Recall that in $B$ we have $x\cdot a=\alpha\left(a\right)x+\delta\left(a\right),$
$a\in A.$ One can prove (using induction) that $x^{n}\cdot a=\sum_{k=0}^{n}c_{nk}\left(a\right)x^{k},$
where $c_{nk}\left(a\right)$ is the sum of all words with $k-$letters
$\alpha$ and $\left(n-k\right)-$letters $\delta,$ applied to $a.$
Since $\left\Vert \alpha\right\Vert \leq1,$ $\left\Vert \delta\right\Vert \leq1$,
then $\left\Vert c_{nk}\left(a\right)\right\Vert _{A}\leq\left\Vert a\right\Vert _{A}.$
Now let $f=\sum_{n=0}^{l}f_{n}x^{n}$ and $g=\sum_{k=0}^{s}g_{k}x^{k}.$
Then 
\[
\left\Vert fg\right\Vert _{R}=\left\Vert \left(\sum_{n=0}^{l}f_{n}x^{n}\right)\left(\sum_{k=0}^{s}g_{k}x^{k}\right)\right\Vert _{R}=\left\Vert \sum_{n=0}^{l}f_{n}\left(\sum_{k=0}^{s}\left(x^{n}g_{k}\right)x^{k}\right)\right\Vert _{R}=
\]
\[
=\left\Vert \sum_{n=0}^{l}\sum_{k=0}^{s}\left(f_{n}\sum_{i=0}^{n}c_{ni}\left(g_{k}\right)x^{i}\right)x^{k}\right\Vert _{R}=\left\Vert \sum_{n=0}^{l}f_{n}\sum_{i=0}^{n}\left(\sum_{k=0}^{s}c_{ni}\left(g_{k}\right)x^{k+i}\right)\right\Vert _{R}\leq
\]
\[
\leq\max_{n}\left(\left\Vert \sum_{i=0}^{n}\sum_{k=0}^{s}f_{n}c_{ni}\left(g_{k}\right)x^{k+i}\right\Vert _{R}\right)=\max_{n}\max_{i}\max_{k}\left(\left\Vert f_{n}\right\Vert _{A}\left\Vert c_{ni}\left(g_{k}\right)\right\Vert _{A}R^{k+i}\right)\leq
\]
\[
=\max_{n}\max_{i}\max_{k}\left(\left\Vert f_{n}\right\Vert _{A}R^{i}\right)\left(\left\Vert g_{k}\right\Vert _{A}R^{k}\right)\leq\left\Vert g\right\Vert _{R}\cdot\max_{n,i}\left\Vert f_{n}\right\Vert _{A}R^{i}=
\]
\[
=\left\Vert g\right\Vert _{R}\cdot\max_{n}\left\Vert f_{n}\right\Vert _{A}R^{n}=\left\Vert f\right\Vert _{R}\left\Vert g\right\Vert _{R}.
\]

\end{proof}
Denote the completion of $B=A\left[x,\alpha,\delta\right]$ w.r.t.
$\left\Vert \cdot\right\Vert _{R}$ by $A\left\{ x/R,\alpha,\delta\right\} .$ 
\begin{rem}
\label{rem:skew-tate_isom}Clear that if $\left|s\right|_{L}=R$ for
some $s\in L$, then $A\left\{ x/R,\alpha,\delta\right\} \cong A\left\{ z,\alpha,s^{-1}\delta\right\} $
with $x$ mapped to $sz.$\end{rem}
\begin{defn}
An algebra of the form $L\left\{ x_{1},\alpha_{1},\delta_{1}\right\} \dots\left\{ x_{n},\alpha_{n},\delta_{n}\right\} $
with $\left\Vert \cdot\right\Vert _{Gauss}=\left\Vert \cdot\right\Vert _{1}$
will be called skew-Tate algebra.

In order to check whether skew-Tate algebras have zero-divisors, we
will study the algebras $\bar{B}$ for $B=A\left\{ x,\alpha,\delta\right\} $.
For $\norm{\delta}\leq1$ the map $\bar{\delta}:\bar{A}\to\bar{A}$,
$\bar{\delta}\left(\bar{a}\right):=\overline{\delta\left(a\right)}$
is a well-defined derivation and if $\alpha:A\to A$ is an isometry,
then $\bar{\alpha}:\bar{A}\to\bar{A}$ is an automorphism. Thus the
Ore extension $\bar{A}\left[\bar{x},\bar{\alpha},\bar{\delta}\right]$
is well-defined and is equal to $\bar{B}$.\end{defn}
\begin{prop}
Let $\alpha$ be an isometry. Then $\norm{\cdot}_{B}$ is multiplicative
if $\norm{\cdot}_{A}$ is multiplicative. In particular, for isometric
$\alpha_{n}$ skew-Tate algebras are (left and right) noetherian with
multiplicative norms.\end{prop}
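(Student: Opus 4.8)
The plan is to upgrade the submultiplicativity of Lemma \ref{lem:Let-A-be} to multiplicativity by showing that the reduction $\bar{B}$ is an integral domain and using that the Gauss norm $\norm{\cdot}_{B}$ is solid. (I take $\norm{\cdot}_{B}=\norm{\cdot}_{1}$; a general Gauss $R$-norm with $R\ge1$ reduces to this by Remark \ref{rem:skew-tate_isom}, since the new $\alpha$-derivation $s^{-1}\delta$ still has norm $\le1$.)

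The first step is to observe that $\bar{A}$ is a domain: by solidity, $\norm{a}_{A}\in\left|L\right|_{L}$ for every $a$, so a nonzero class of $\bar{A}=A^{0}/A^{00}$ has a representative $a$ with $\norm{a}_{A}=1$, and multiplicativity of $\norm{\cdot}_{A}$ then gives $\norm{ab}_{A}=1$, i.e. $\bar{a}\,\bar{b}=\overline{ab}\ne0$, whenever $\bar{a},\bar{b}\ne0$. The second step uses the Definition preceding the Proposition — the one place where the hypothesis ``$\alpha$ isometric'' enters: there $\bar{B}=\bar{A}[\bar{x},\bar{\alpha},\bar{\delta}]$ with $\bar{\alpha}$ an automorphism, and an Ore extension of a domain by an injective endomorphism has no zero divisors (if $f,g$ have leading $\bar{x}$-terms $a_{m}\bar{x}^{m}$, $b_{n}\bar{x}^{n}$, then $fg$ has leading term $a_{m}\bar{\alpha}^{m}(b_{n})\bar{x}^{m+n}\ne0$), so $\bar{B}$ is a domain. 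Multiplicativity of $\norm{\cdot}_{B}$ then follows formally: given nonzero $f,g\in B$, rescale by powers of $\pi$ to $f',g'$ of norm $1$; their classes in $\bar{B}$ are nonzero, hence $\overline{f'g'}=\bar{f'}\,\bar{g'}\ne0$, i.e. $\norm{f'g'}_{B}\ge1$, which together with Lemma \ref{lem:Let-A-be} forces $\norm{f'g'}_{B}=1$ and therefore $\norm{fg}_{B}=\norm{f}_{B}\norm{g}_{B}$.

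For the last assertion I would induct on the length $n$ of the tower $A_{i}=L\{x_{1},\alpha_{1},\delta_{1}\}\cdots\{x_{i},\alpha_{i},\delta_{i}\}$, carrying along both ``$\norm{\cdot}_{A_{i}}$ is multiplicative'' and ``$\overline{A_{i}}$ is left and right noetherian''. The base $A_{0}=L$ is immediate, $\overline{A_{0}}=\bar{L}$ being a field. In the inductive step the first part of the Proposition (applicable since $\alpha_{i}$ is an isometry) gives multiplicativity of $\norm{\cdot}_{A_{i}}$, while $\overline{A_{i}}=\overline{A_{i-1}}[\bar{x}_{i},\bar{\alpha}_{i},\bar{\delta}_{i}]$ is an Ore extension of a left and right noetherian ring by an automorphism, hence left and right noetherian by the Hilbert basis theorem for Ore extensions. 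Finally, solidity of $\norm{\cdot}_{A_{n}}$ makes every graded piece of $Gr_{\cdot}A_{n}$ a copy of $\overline{A_{n}}$ (via multiplication by the appropriate power of $\pi$), so $Gr_{\cdot}A_{n}\cong\overline{A_{n}}[\bar{\pi},\bar{\pi}^{-1}]$; this is a localization of the left and right noetherian polynomial ring $\overline{A_{n}}[\bar{\pi}]$, hence left and right noetherian, and therefore $A_{n}$ is left and right noetherian by \cite[1.1]{ST33}.

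I do not anticipate a genuine obstacle. The only points requiring care are the transfers ``multiplicative norm $\leftrightarrow$ reduction without zero divisors'' and ``$Gr_{\cdot}\leftrightarrow\overline{(-)}[\bar{\pi},\bar{\pi}^{-1}]$'', both of which rest on the Gauss norm being solid and on $L$ being discretely valued; the remaining ingredients — the identification $\bar{B}=\bar{A}[\bar{x},\bar{\alpha},\bar{\delta}]$ granted by the preceding Definition, the leading-coefficient argument for Ore extensions of domains, and the Hilbert basis theorem for Ore extensions — are routine or already in place.
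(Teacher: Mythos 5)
Your argument is correct and follows essentially the same route as the paper's (one-line) proof: multiplicativity is deduced from the reduction $\bar{B}=\bar{A}[\bar{x},\bar{\alpha},\bar{\delta}]$ being a domain, and the noetherian property from the Hilbert basis theorem for Ore extensions together with the graded criterion of \cite[1.1]{ST33}. You simply supply the details (solidity/rescaling, the leading-coefficient argument, the identification $Gr_{\cdot}A\cong\bar{A}[\bar{\pi},\bar{\pi}^{-1}]$) that the paper leaves implicit.
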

\begin{proof}
Follows from the fact that $\bar{A}\left[\bar{x},\bar{\alpha},\bar{\delta}\right]$
does not have zero divisors if $\bar{A}$ does not have ones. Second
statement is true since skew-polynomial rings are (left and right)
noetherian.\end{proof}
\begin{rem}
For $\delta=0$, $\alpha$ an isometry and $0<R<1$, the Gauss norm
$\norm{\cdot}_{R}$ is a multiplicative algebra norm if $\norm{\cdot}_{A}$
is multiplicative. The proof is the same as in commutative case.
\end{rem}
For Tate algebra we have Weierstrass division and preparation theorems.
One has similar results for skew-Tate algebras.
\begin{defn}
An element $f\in A\left\{ x,\alpha,\delta\right\} $ with $\left\Vert f\right\Vert _{Gauss}=1$
is called regular of degree d if $\bar{f}$ has the form $\lambda z^{d}+{\displaystyle \sum_{i=0}^{d-1}c_{i}z^{i}}$
with $\lambda\in\bar{L}^{*}$ and $c_{i}\in\bar{A}.$\end{defn}
\begin{thm}
(Weierstrass division and preparation)
\begin{enumerate}
\item (Division) Let f be a regular element of $A\left\{ z,\alpha,\delta\right\} $
of degree d. Then for any g in $A\left\{ z,\alpha,\delta\right\} $
there exists unique q and r such that g=qf+r and degree of r is less
then d. Moreover, $\left\Vert g\right\Vert _{Gauss}=\max\left(\left\Vert q\right\Vert _{Gauss},\left\Vert r\right\Vert _{Gauss}\right).$
\item (Preparation) Let f be a regular element of $A\left\{ x,\alpha,\delta\right\} $
of degree d. Then there exists $w\in A\left[x,\alpha,\delta\right],$
s.t. $f=w\cdot e,$ where e is a unit in $A\left\{ x,\alpha,\delta\right\} $,
and w is regular of degree d. If $f\in A\left[x,\alpha,\delta\right]$
then also $e\in A\left[x,\alpha,\delta\right]$.
\end{enumerate}
\end{thm}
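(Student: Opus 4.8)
The plan is to reduce both parts to division by a monic skew-polynomial of degree $d$, for which an exact Euclidean algorithm is available, and then to treat the remaining ``tail'' by a Banach contraction argument, following the commutative Tate-algebra proof in \cite{FVDP}. Throughout, $\alpha$ is an isometry and $\norm{\delta}\le1$ (so $\bar B=\bar A[\bar x,\bar\alpha,\bar\delta]$); write $B=A\{x,\alpha,\delta\}$, $P_{<d}=\bigoplus_{i<d}Ax^{i}$, $f=\sum_i f_i x^i$. Regularity of degree $d$ gives $\overline{f_d}=\lambda\in\bar L^{*}$ and $\overline{f_i}=0$ for $i>d$, so $\norm{f_i}_A\le1$ for $i<d$, $\norm{f_i}_A<1$ for $i>d$, and (lifting $\lambda$ to a norm-one $\lambda_0\in L^{0}$ and inverting $1+\lambda_0^{-1}(f_d-\lambda_0)$ by a geometric series in the Banach algebra $A$) $f_d$ is a unit with $\norm{f_d}_A=\norm{f_d^{-1}}_A=1$. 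Replacing $f$ by $f_d^{-1}f$ — again regular of degree $d$, of Gauss norm $1$, with leading coefficient $1$, and with the same division theory up to right multiplication of quotients by $f_d^{-1}$ (which changes no Gauss norms) — I may assume $f=P+\tilde f$ with $P=x^{d}+\sum_{i<d}f_i x^{i}$ monic of degree $d$, $\norm P_{Gauss}=1$, and $\norm{\tilde f}_{Gauss}=c<1$ (the supremum over $i>d$ of $\norm{f_i}_A$ is attained and $<1$ because $L$ is discretely valued).

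\emph{Division by $P$.} For any monic $P\in A[x,\alpha,\delta]$ of degree $d$ with coefficients in $A^{0}$, ordinary Euclidean division $h=qP+r$ with $\deg r<d$ on the skew-polynomial ring $A[x,\alpha,\delta]$ is norm-non-increasing: inducting on $\deg h$, kill the leading term of $h$ by subtracting $h_m x^{m-d}P$, which has the same leading coefficient and Gauss norm $\le\norm h_{Gauss}$ (the bound on the $x^{m-d}$-twists of the coefficients of $P$ uses $\norm\alpha\le1$, $\norm\delta\le1$, as in the proof of Lemma~\ref{lem:Let-A-be}). Since $A[x,\alpha,\delta]$ is dense in $B$ and this division is $L$-linear and $1$-Lipschitz, it extends by continuity to all $h\in B$, still with $\norm q_{Gauss},\norm r_{Gauss}\le\norm h_{Gauss}$ and $r\in P_{<d}$. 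For uniqueness on $B$: if $h\ne0$ and $N$ is the \emph{largest} index with $\norm{h_N}_A=\norm h_{Gauss}$, then in $hP=hx^{d}+h(P-x^{d})$ the coefficient of $x^{N+d}$ is $h_N$ plus an $A$-linear combination (with norm-$\le1$ coefficients) of the $h_k$ with $k>N$, so it has norm $\norm h_{Gauss}>0$; thus $hP$ has a nonzero coefficient in some degree $\ge d$, in particular $hP\notin P_{<d}$. This gives uniqueness, the identity $\norm{hP}_{Gauss}=\norm h_{Gauss}$, and hence $\norm h_{Gauss}=\max(\norm q_{Gauss},\norm r_{Gauss})$ for the $P$-division.

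\emph{Conclusion of (1), and (2).} For $g\in B$, solving $g=qf+r$ with $r\in P_{<d}$ is equivalent to $g-q\tilde f=qP+r$, i.e.\ to $q$ being a fixed point of the map $q\mapsto(\text{$P$-quotient of }g-q\tilde f)$; this map is a $c$-contraction of the complete space $B$ (as $\norm{\tilde f}_{Gauss}=c<1$ and $P$-division is $1$-Lipschitz), so it has a unique fixed point, giving existence and uniqueness of $(q,r)$; and from $g-q\tilde f=qP+r$, the identity above, and $\norm{q\tilde f}_{Gauss}\le c\norm q_{Gauss}<\norm{g-q\tilde f}_{Gauss}$ when $q\ne0$, ultrametricity yields $\norm g_{Gauss}=\max(\norm q_{Gauss},\norm r_{Gauss})$; undoing the normalization $f\mapsto f_d^{-1}f$ proves (1). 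The symmetric argument (left and right interchanged) gives the analogous division $g=fq+r$ with $f$ on the left; applying it to $g=x^{d}$ yields $x^{d}=fq+r$ with $\norm q_{Gauss},\norm r_{Gauss}\le1$, and putting $w:=x^{d}-r\in A[x,\alpha,\delta]$ — monic of degree $d$, $\norm w_{Gauss}=1$, $\bar w=\bar x^{d}-\bar r$ of the required shape, hence regular of degree $d$ — we get $w=fq$. Here $q$ is a unit: $\norm q_{Gauss}=1$ (else $\bar w=\bar f\bar q=0$, impossible), and $\bar w=\bar f\bar q$ in $\bar B$ forces $\deg\bar q=0$ (the leading coefficient $\lambda$ of $\bar f$ is a unit, so $\deg(\bar f\bar q)=d+\deg\bar q$, which must equal $\deg\bar w=d$) and then $\overline{q_0}\in\bar L^{*}$ on comparing $\bar x^{d}$-coefficients, so $q_0\in A^{*}$ and $\norm{q_i}_A<1$ for $i\ge1$, whence $q=q_0(1+q_0^{-1}(q-q_0))$ is a unit of $B$. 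Thus $f=w\cdot e$ with $e:=q^{-1}$. If moreover $f\in A[x,\alpha,\delta]$, left Euclidean division of $f$ by the monic polynomial $w$ gives $f=wq'+r'$ with $q',r'\in A[x,\alpha,\delta]$, $\deg r'<d$; read in $B$ this is a decomposition of $f$ of the type in (1) for the regular element $w$, so by uniqueness in (1) it equals $f=we+0$, hence $e=q'\in A[x,\alpha,\delta]$.

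The main obstacle is the ``division by $P$'' step on the \emph{completed} algebra $B$ — specifically the uniqueness and the norm identity $\norm{hP}_{Gauss}=\norm h_{Gauss}$ — because the $\delta$-part produces lower-degree contributions that break the ``lowest nonzero coefficient'' argument one would use for commutative power series; the remedy is the ``largest index of maximal norm'' computation above. Everything else is a routine transcription of the commutative Tate-algebra argument, the only extra wrinkle being the need for the left-sided division theorem in the preparation statement.
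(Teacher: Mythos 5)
Your argument is correct and follows the same master plan as the paper's proof: split $f$ into a degree-$d$ polynomial part with invertible leading coefficient plus a perturbation of Gauss norm $<1$, divide exactly by the polynomial part with norm control, and remove the perturbation by a contraction (equivalently, a geometric-series iteration). The differences are in the execution, and two are worth recording. First, for the exact division you normalize the leading coefficient to $1$ and use Euclidean division in $A[x,\alpha,\delta]$, extended to the completion by density and $1$-Lipschitz linearity, proving uniqueness (in fact the isometry $\norm{hP}_{Gauss}=\norm h_{Gauss}$) by inspecting the coefficient of $x^{N+d}$ for $N$ the largest index with $\norm{h_N}_{A}=\norm h_{Gauss}$; the paper instead computes $z^{n}=q_{n}f_{0}+r_{n}$ recursively and proves uniqueness by reducing modulo the ideal of elements of norm $<1$ and comparing degrees in $\bar A[\bar x,\bar\alpha,\bar\delta]$. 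Both work; your route gives the slightly stronger intermediate fact that right multiplication by $P$ is isometric. Second, in part (2) you are more careful than the paper about sidedness: the paper divides $x^{d}=e'f+r$ and obtains $\omega=e'f$, hence $f=(e')^{-1}\omega$, which is unit-times-$w$ rather than the claimed $w$-times-unit; you instead establish the opposite-sided division $g=fq+r$ and get $w=fq$, $f=wq^{-1}$, exactly as stated. That symmetric division is indeed available, but you should note explicitly that it uses $\alpha^{-1}$ being an isometry (the sided Euclidean step kills the top term via $u=\alpha^{-d}(g_{m})$), which holds here since $\alpha$ is an isometric automorphism. You also supply a genuine argument, via uniqueness of the sided division, for the final claim that $e$ is a polynomial when $f$ is, which the paper merely asserts; the only slip there is that you cite ``uniqueness in (1)'' when the decomposition $f=wq'+r'$ has its quotient on the right of the divisor, so the uniqueness you actually need is that of your symmetric division theorem.
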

\begin{proof}
(1) Take $f=f_{0}-D,$ where $f_{0}=\lambda z^{d}+{\displaystyle \sum_{i=0}^{d-1}c_{i}z^{i}}$,
$c_{i}=A^{0}$ and $\left\Vert D\right\Vert _{Gauss}<1.$ Lets prove
that the statement of (1) is true for $f_{0}.$ Lets first prove the
statement for powers of $z,$ i.e. $z^{i}=q_{i}f_{0}+r_{i}.$ We have
the identity 
\[
z^{d}=\lambda^{-1}\left(\lambda z^{d}+{\displaystyle \sum_{i=0}^{d-1}c_{i}z^{i}}-{\displaystyle \sum_{i=0}^{d-1}c_{i}z^{i}}\right)=q_{d}f_{0}+r_{d}
\]
 with $q_{d}=\lambda^{-1}$ and $r_{d}=\lambda^{-1}\left({\displaystyle \sum_{i=0}^{d-1}c_{i}z^{i}}\right)$
and for $i<d$ $q_{i}=0.$ Now for $z^{n+1}$ we have $z^{n+1}=z\cdot z^{n}=z\cdot\left(q_{n}f_{0}+r_{n}\right)=\left(z\cdot q_{n}\right)f_{0}+z\cdot r_{n}.$
If $r_{n}={\displaystyle \sum_{i=0}^{d-1}c_{ni}z^{i}}$ then from
commuting relations we compute 
\[
z\cdot r_{n}={\displaystyle \sum_{i=0}^{d-1}\left(z\cdot c_{ni}\right)z^{i}={\displaystyle \sum_{i=0}^{d-1}\left(\alpha\left(c_{ni}\right)z+\delta\left(c_{ni}\right)\right)z^{i}=}}
\]
\[
=\alpha\left(c_{n\left(d-1\right)}\right)z^{d}+{\displaystyle \sum_{i=0}^{d-1}\left(\alpha\left(c_{n\left(i-1\right)}\right)+\delta\left(c_{ni}\right)\right)z^{i}}=
\]
\[
=\alpha\left(c_{n\left(d-1\right)}\right)\left(\lambda^{-1}f_{0}+r_{d}\right)+{\displaystyle \sum_{i=0}^{d-1}\left(\alpha\left(c_{n\left(i-1\right)}\right)+\delta\left(c_{ni}\right)\right)z^{i}}=
\]
\[
=\alpha\left(c_{n\left(d-1\right)}\right)\lambda^{-1}f_{0}+\alpha\left(c_{n\left(d-1\right)}\right)r_{d}+{\displaystyle \sum_{i=0}^{d-1}\left(\alpha\left(c_{n\left(i-1\right)}\right)+\delta\left(c_{ni}\right)\right)z^{i}}.
\]
Thus $z^{n+1}=q_{n+1}f_{0}+r_{n+1},$ where $q_{n+1}=z\cdot q_{n}+\lambda^{-1}\alpha\left(c_{n\left(d-1\right)}\right)$
and $r_{n+1}=\alpha\left(c_{n\left(d-1\right)}\right)r_{d}+{\displaystyle \sum_{i=0}^{d-1}\left(\alpha\left(c_{n\left(i-1\right)}\right)+\delta\left(c_{ni}\right)\right)z^{i}}.$
It is clear that is this formulas the norms of the coefficients do
not increase, and thus for any $g={\displaystyle \sum_{n=0}^{\infty}g_{n}z^{n}}$
we get the equality 
\[
g=\left(\sum_{n=0}^{\infty}g_{n}q_{n}\right)f_{0}+\left(\sum_{n=0}^{\infty}g_{n}r_{n}\right)
\]
with both sums being convergent in $A\left\{ z,\alpha,\delta\right\} $.

Now lets prove the division property for $f.$ We have $f_{0}=f+D$
and for any $g\in A\left\{ z,\alpha,\delta\right\} $ we have the
decomposition 
\[
g=q_{0}f_{0}+r_{0}=q_{0}f+q_{0}D+r_{0}=q_{0}f+g_{1}+r_{0}
\]
 where $g_{1}=q_{0}D.$ Since the norm is submultiplicative, we have
\[
\left\Vert g_{1}\right\Vert _{Gauss}\leq\left\Vert q_{0}\right\Vert _{Gauss}\left\Vert D\right\Vert _{Gauss}\leq\left\Vert g\right\Vert _{Gauss}\left\Vert D\right\Vert _{Gauss}.
\]
We have the same decomposition for $g_{1}$, 
\[
g_{1}=q_{1}f_{0}+r_{1}=q_{1}f+g_{2}+r_{1}
\]
 where $g_{2}=q_{1}D$ and 
\[
\left\Vert g_{2}\right\Vert _{Gauss}\leq\left\Vert q_{1}\right\Vert _{Gauss}\left\Vert D\right\Vert _{Gauss}\leq\left\Vert g_{1}\right\Vert _{Gauss}\left\Vert D\right\Vert _{Gauss}\leq\left\Vert g\right\Vert _{Gauss}\left\Vert D\right\Vert _{Gauss}^{2}.
\]
Continuing by induction, we construct zero sequences $g_{n},$ $q_{n}$
and $r_{n}$ s.t. $g_{n}=q_{n}f+g_{n+1}+r_{n}.$ Adding up all this
recurrent relations gives 
\[
g=\left(\sum_{n=0}^{\infty}q_{n}\right)f+\left(\sum_{n=0}^{\infty}r_{n}\right)
\]
and 
\[
\left\Vert g\right\Vert _{Gauss}=\max\left(\left\Vert \sum_{n=0}^{\infty}q_{n}\right\Vert _{Gauss},\left\Vert \sum_{n=0}^{\infty}r_{n}\right\Vert _{Gauss}\right).
\]

Now lets prove uniqueness. An equality $g=q_{1}f+r_{1}=q_{2}f+r_{2},$
imply $0=\left(q_{1}-q_{2}\right)f+\left(r_{1}-r_{2}\right).$ Since
norm of $f$ is one, we have $\left\Vert q_{1}-q_{2}\right\Vert _{Gauss}=\left\Vert r_{1}-r_{2}\right\Vert _{Gauss}$
and multiplication by an appropriate number makes both norms equal
1. But then in $A\left\{ z,\alpha,\delta\right\} ^{0}/A\left\{ z,\alpha,\delta\right\} ^{00}$
we have $\overline{q_{1}-q_{2}}\cdot\bar{f}=\overline{r_{1}-r_{2}},$
and this is impossible, since on left hand side we have a skew-polynomial
of degree $\geq d$ and on the right hand side $<d$.

(2) Since $f$ is distinguished, by (1) there exists $e'$ and $r'$
s.t. $x^{d}=e'f+r$ and $\mbox{deg}\left(r\right)<d.$ Define $\omega=x^{d}-r.$
We have $\omega=e'f.$ Since $\left\Vert r\right\Vert _{Gauss}\leq\left\Vert x^{d}\right\Vert _{Gauss}=1,$
we get $\left\Vert \omega\right\Vert _{Gauss}=1$ and $\omega$ is
distinguished of degree $d$. Then in $A\left\{ z,\alpha,\delta\right\} ^{0}/A\left\{ z,\alpha,\delta\right\} ^{00}$
we have $\overline{\omega}=\overline{e'}\overline{f}$ with $\bar{\omega}$
and $\bar{f}$ being unitary skew-polynomials of the same degree.
This means that $\bar{e'}$ is a unit in $A\left\{ z,\alpha,\delta\right\} ^{0}/A\left\{ z,\alpha,\delta\right\} ^{00}$
and $e'$ is a unit in $A\left\{ z,\alpha,\delta\right\} .$ If $f$
is a polynomial then also must be $e.$
\end{proof}

\section{Quantum hyperenveloping algebra of $U_{q}\left(\mathfrak{sl}_{2,L}\right)$ }

From now on $q\in L$ is a nonzero number.

\subsection{Norm completions}

We want to define a completion of $U_{q}\left(\mathfrak{sl}_{2,L}\right)$
with respect to an analogue of the Gauss $R$-norm.

In order to do so, lets recall \cite[Prop. 6.1.4]{Kas} that $U_{q}\left(\mathfrak{sl}_{2,L}\right)$
is a noetherian algebra, obtained by a sequence of Ore extensions

\[
\begin{array}{c}
L\left[K,K^{-1}\right]=\uqk hL\emb\uqk b{L,-}=\uqk hL\left[F,\alpha_{0},0\right]\\
\alpha_{0}\left(K\right)=q^{2}K
\end{array}
\]
and 
\begin{equation}
\begin{array}{c}
\uqk b{L,-}\emb\uqk gL=\uqk b{L,-}\left[E,\alpha_{1},\delta\right]\\
\alpha_{1}\left(F^{j}K^{l}\right)=q^{-2l}F^{j}K^{l}\\
\delta\left(F\right)=\frac{K-K^{-1}}{q-q^{-1}}\\
\delta\left(F^{j}K^{l}\right)={\displaystyle \sum_{i=0}^{j-1}F^{j-1}\delta\left(F\right)\left(q^{-2i}K\right)K^{l}}\\
\delta\left(K\right)=0
\end{array}.\label{eq:1}
\end{equation}
Let $\uqr h{R_{K}}$ be the algebra of Laurent series in $K,$ 
\[
\uqr h{R_{K}}:=\left\{ \sum_{n\in Z}f_{n}K^{n}|\mbox{ }\lim_{n\to\pm\infty}\left|f_{n}\right|_{L}R_{K}^{n}=0\right\} 
\]
with fixed $R_{K}.$ It is a Banach $K-$algebra w.r.t. the norm 
\[
\norm f_{R_{K}}=\max_{n\in Z}\left|f_{n}\right|_{L}R_{K}^{n}.
\]
 Let $\left|q\right|_{L}=1.$ Then the map 
\[
\alpha_{0}:\uqr h{R_{K}}\to\uqr h{R_{K}},\mbox{ }\alpha_{0}\left(K\right)=q^{2}K
\]
 is an isometry. By lemma \ref{lem:Let-A-be}, the algebra 
\[
\uqr[L,-]b{R_{K},\, R_{F}}:=\uqr h{R_{K}}\left\{ \frac{F}{R_{F}},\alpha_{0},0\right\} 
\]
 is a Banach $\uqr h{R_{K}}$-algebra, which can be described as 
\[
\uqr[L,-]b{R_{K},\, R_{F}}=\left\{ \sum_{n=0}^{\infty}a_{n}F^{n}\Big|\; a_{n}\in\uqr h{R_{K}},\mbox{ s.t. }\lim_{n\to\infty}\norm{a_{n}}_{R_{K}}R_{F}^{n}=0,\right\} 
\]
or a $K-$Banach algebra of convergent series in $F,\mbox{ }K^{\pm1}$
with radius at least $\left(R_{F},R_{K}\right).$ Similarly one construct
the algebra $\uqr[L,+]b{R_{K},\, R_{E}}$. 

From formulas \ref{eq:sl2comultiplication}, \ref{eq:sl2counit} and
\ref{eq:sl2antipode} one can see that for $R_{K}=1$ the comultiplication,
counit and antipode of $\uqk{sl}{2,L}$ are bounded maps (with $\norm{\Delta},\norm{\epsilon},\norm S\leq1$)
and thus they make the algebras $\uqr[L,-]b{R_{F}}:=\uqr[L,-]b{1,\, R_{F}}$
and $\uqr[L,+]b{R_{E}}:=\uqr[L,+]b{1,\, R_{E}}$ into Banach Hopf
algebras. Also from formulas \ref{eq:sl2bpbm_pairing} one can see
that the pairing $\left\langle \cdot,\cdot\right\rangle :\uqk b{L,+}\times\uqk b{L,-}^{op}\to L$
satisfy the condition $\left|\left\langle x,y\right\rangle \right|_{L}\leq\norm x_{R}\norm y_{R}$
when $R>\left|\left(q^{-1}-q\right)^{-1}\right|_{L}$ and thus this
pairing can be extended to the pairing $\left\langle \cdot,\cdot\right\rangle :\uqr[L,+]bR\times\uqr[L,-]bR^{op}\to L$
of Banach Hopf algebras.

Consider the quantum double $D\left(\uqr[L,+]bR,\uqr[L,-]bR\right)$.
As a Banach space it is equal to $\uqr[L,-]bR\cten\uqr[L,+]bR$. The
remarks in the above paragraph imply that the multiplication \ref{eq:double_product}
on $D\left(\uqr[L,+]bR,\uqr[L,-]bR\right)$ is the composition of
maps of norm $\leq1$, which means that the norm on $D\left(\uqr[L,+]bR,\uqr[L,-]bR\right)$
(the tensor product norm on $\uqr[L,-]bR\cten\uqr[L,+]bR$) is submultiplicative.
Similar to the algebraic case, $D\left(\uqr[L,+]bR,\uqr[L,-]bR\right)$
can be described as the space 
\begin{equation}
\begin{array}{c}
D\left(\uqr[L,+]bR,\uqr[L,-]bR\right)=\\
\left\{ {\displaystyle \sum_{\bar{n}=0}^{\infty}a_{\bar{n}}E^{n_{E}}K^{n_{K}}K_{-}^{n_{K_{-}}}F^{n_{F}}\Big|\;\begin{array}{c}
n_{E},n_{F}\in\mathbb{N},n_{K_{\pm}}\in\mathbb{Z};\: a_{\bar{n}}\in L,\mbox{ s.t. }\\
\lim_{\left|\bar{n}\right|\to\infty}\left|a_{\bar{n}}\right|_{L}R^{n_{E}+n_{F}}=0
\end{array}}\right\} 
\end{array}\label{eq:doubleDescr}
\end{equation}
with multiplication defined by relations of $\uqr[L,-]bR$, $\uqr[L,+]bR$
and \ref{eq:double_relations}.
\begin{lem}
\label{lem:The-double-ass-gr-noeth}The double $D\left(\uqr[L,+]bR,\uqr[L,-]bR\right)$
as an $L$-algebra is a (left and right) noetherian and it's norm
is multiplicative. If $\left|1-q\right|_{L}<1$ then $Gr_{\cdot}D\left(\uqr[L,+]bR,\uqr[L,-]bR\right)$
is commutative.\end{lem}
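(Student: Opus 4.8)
The plan is to compute the associated graded ring $Gr_{\cdot}D$ of $D:=D\left(\uqr[L,+]bR,\uqr[L,-]bR\right)$ with respect to its norm filtration, identify it with an iterated skew-Laurent/skew-polynomial ring over $Gr_{\cdot}L=\bar{L}[\bar{\pi},\bar{\pi}^{-1}]$, and then deduce all three assertions from the general facts recalled in the Preliminaries. Since $D$ is an $L$-Banach algebra whose norm is submultiplicative and solid (the latter being visible from \ref{eq:doubleDescr}), the norm filtration on $D$ is complete and quasi-integral, so it is enough to prove that $Gr_{\cdot}D$ is a left and right Noetherian domain and that it is commutative when $\left|1-q\right|_{L}<1$.

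By \ref{eq:doubleDescr} the monomials $E^{n_{E}}K^{n_{K}}K_{-}^{n_{K_{-}}}F^{n_{F}}$ form an orthogonal $L$-basis of $D$ with $\norm{E^{n_{E}}K^{n_{K}}K_{-}^{n_{K_{-}}}F^{n_{F}}}=R^{n_{E}+n_{F}}$, so their symbols form a homogeneous $Gr_{\cdot}L$-basis of $Gr_{\cdot}D$. The one arithmetic fact that makes everything go through is that $R^{2}>\left|(q^{-1}-q)^{-1}\right|_{L}$: writing $R=\left|\pi\right|_{L}^{-k}$ and $\left|q-q^{-1}\right|_{L}=\left|\pi\right|_{L}^{w}$ with $w\geq0$ (recall $\left|q\right|_{L}=1$), the hypothesis $R>\left|(q^{-1}-q)^{-1}\right|_{L}$ forces $k\geq w+1$, hence $2k\geq 2w+2>w$. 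Consequently, when one multiplies two of the basis monomials and rewrites the product in the basis using the relations of $\uqr[L,-]bR$, $\uqr[L,+]bR$ and \ref{eq:double_relations}, the maximal-norm term is again a single monomial $E^{\ast}K^{\ast}K_{-}^{\ast}F^{\ast}$ with scalar coefficient a product of powers of $q$, whereas every correction produced by the relation $EF-FE=\frac{K-K_{-}^{-1}}{q-q^{-1}}$ has strictly smaller norm: each use of that relation lowers the combined $E,F$-degree by $2$ (a factor $R^{-2}$ in the norm bound) at the cost of one factor $\left|(q^{-1}-q)^{-1}\right|_{L}$, and $R^{-2}\left|(q^{-1}-q)^{-1}\right|_{L}<1$. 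So in $Gr_{\cdot}D$ all such corrections vanish, while the scalars $q^{\pm2}$ coming from $KE=q^{2}EK$, $K_{-}EK_{-}^{-1}=q^{2}E$, $KFK^{-1}=q^{-2}F$ and $K_{-}F=q^{-2}FK_{-}$ survive as units $\overline{q^{\pm2}}\in\bar{L}^{\times}$.

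It follows that the iterated Ore extension
\[
\mathcal{A}:=Gr_{\cdot}L[\mathbf{K}^{\pm1}][\mathbf{K}_{-}^{\pm1}][\mathbf{E};\sigma_{E}][\mathbf{F};\sigma_{F}],
\]
with $\mathbf{K},\mathbf{K}_{-}$ central and invertible, $\sigma_{E}$ the automorphism scaling $\mathbf{K},\mathbf{K}_{-}$ by $\overline{q^{-2}}$, and $\sigma_{F}$ the automorphism scaling $\mathbf{K},\mathbf{K}_{-}$ by $\overline{q^{2}}$ and fixing $\mathbf{E}$ (well defined, since $\sigma_{E}$ and $\sigma_{F}$ commute), surjects onto $Gr_{\cdot}D$ by sending each generator to the corresponding symbol; the computation of the previous paragraph shows this map respects the defining relations of $\mathcal{A}$, and it carries the PBW monomial basis of $\mathcal{A}$ bijectively onto the monomial-symbol basis of $Gr_{\cdot}D$, hence is an isomorphism. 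A skew-Laurent extension and a skew-polynomial extension by an automorphism each preserve left and right Noetherianity and the property of being a domain, so $\mathcal{A}\cong Gr_{\cdot}D$ is a Noetherian domain; by the Preliminaries $\norm{\cdot}$ is then multiplicative and $D$ is left and right Noetherian. Finally, if $\left|1-q\right|_{L}<1$ then $\bar{q}=1$, so $\overline{q^{\pm2}}=1$, the maps $\sigma_{E},\sigma_{F}$ are the identity, and $\mathcal{A}\cong Gr_{\cdot}L[\mathbf{K}^{\pm1},\mathbf{K}_{-}^{\pm1},\mathbf{E},\mathbf{F}]$ is commutative; hence $Gr_{\cdot}D$ is commutative.

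The only step that needs genuine care is the identification of $Gr_{\cdot}D$ with $\mathcal{A}$: one must check that re-expanding a product of basis monomials in the basis keeps the leading term in PBW shape and pushes every commutator correction to strictly lower order, the inequality $R^{2}>\left|(q^{-1}-q)^{-1}\right|_{L}$ being precisely what guarantees the latter. Granting this, the Noetherianity, the multiplicativity of the norm and the commutativity of the graded ring are all formal.
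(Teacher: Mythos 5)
Your proof is correct and follows essentially the same route as the paper: compute $Gr_{\cdot}D$ for the norm filtration, identify it with an iterated (skew-Laurent/skew-polynomial) Ore extension of $\bar{L}[\bar{\pi}^{\pm1}]$, and deduce Noetherianity, multiplicativity of the norm, and commutativity for $\left|1-q\right|_{L}<1$ from the facts recalled in the Preliminaries. You supply the one detail the paper leaves implicit, namely that $R>\left|\left(q^{-1}-q\right)^{-1}\right|_{L}\geq1$ forces the correction term of $EF-FE$ to have norm strictly below $R^{2}$, so that $\tilde{E}$ and $\tilde{F}$ commute in the graded ring.
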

\begin{proof}
From relations \ref{eq:double_relations} we have an isomorphism 
\[
Gr_{\cdot}D\left(\uqr[L,+]bR,\uqr[L,-]bR\right)\simeq\bar{L}[\bar{\pi}^{\pm1}]\left[\tilde{K}^{\pm1},\tilde{K}_{-}^{\pm1}\right]\left[\tilde{F},\beta_{0},0\right]\left[\tilde{E},\beta_{1},0\right]
\]
and the algebra on the right hand side is (left and right) noetherian
without zero divizors. If $\left|1-q\right|_{L}<1$ then $\bar{q}=\bar{1}$
and the ring 
\[
Gr_{\cdot}D\left(\uqr[L,+]bR,\uqr[L,-]bR\right)\simeq\bar{L}[\bar{\pi}^{\pm1}]\left[\tilde{K}^{\pm1},\tilde{K}_{-}^{\pm1}\right]\left[\tilde{F}\right]\left[\tilde{E}\right]
\]
is commutative.
\end{proof}
We define the Banach Hopf algebra 
\[
\uqr[2,L]{sl}R:=D\left(\uqr[L,+]bR,\uqr[L,-]bR\right)/I_{q},
\]
where $I_{q}$ is the closed Hopf ideal generated by $\left(K-K_{-}\right)$.
As an algebra it is (left and right) noetherian. Multiplicativity
of the norm on $\uqr[2,L]{sl}R$ (the quotient norm from $D\left(\uqr[L,+]bR,\uqr[L,-]bR\right)$)
can be checked similarly to the lemma \ref{lem:The-double-ass-gr-noeth}.

\subsection{quantum hyperenveloping algebra}

Similar to the \ref{eq:doubleDescr}, the Banach Hopf algebras $\uqr[2,L]{sl}R$
can be described as the Banach space of convergent power series 
\[
\left\{ {\displaystyle \sum_{\bar{n}=0}^{\infty}a_{\bar{n}}E^{n_{E}}K^{n_{K}}F^{n_{F}}\Big|\; n_{E},n_{F}\in\mathbb{N},n_{K}\in\mathbb{Z};\: a_{\bar{n}}\in L,\mbox{ s.t. }\lim_{\left|\bar{n}\right|\to\infty}\left|a_{\bar{n}}\right|_{L}R^{n_{E}+n_{F}}=0}\right\} .
\]
From this description it is clear that for $R_{1}<R_{2}$ we have
an injective (and compact) map $\uqr[2,L]{sl}{R_{1}}\from\uqr[2,L]{sl}{R_{2}}$.
Thus we have a projective system of Banach Hopf algebras with injective
and compact transition maps. It's projective limit is a nuclear Fréchet
space, which also have a topological Hopf algebra structure.
\begin{defn}
We define the quantum hyperenveloping algebra of $\mathfrak{sl}_{2,L}$
as $\huq{sl}{2,L}:=\lim_{\from}\uqr[2,L]{sl}R.$

Similarly one can define the Fréchet Hopf algebras $\huq b{-,L}$
and $\huq b{+,L}$. Their double $D\left(\huq b{+,L},\huq b{-,L}\right)$
is a Fréchet Hopf algebra and one has a topological isomorphisms 
\[
D\left(\huq b{+,L},\huq b{-,L}\right)\cong\lim_{\from}D\left(\uqr[+,L]bR,\uqr[-,L]bR\right)
\]
 and 
\[
\huq{sl}{2,L}\cong D\left(\huq b{+,L},\huq b{-,L}\right)/\hat{I_{q}},
\]
where $\hat{I_{q}}$ is the closed ideal, generated by $\left(K-K_{-}\right)$.

Recall that a Fréchet algebra $A$ is called Fréchet-Stein (compare
to \cite[sec. 3]{ST33})if it is a locally convex projective limit
of Banach algebras $A_{n}$ with transition maps $A_{n}\from A_{n+1}$
having dense images, such that\end{defn}
\begin{itemize}
\item each $A_{n}$ is (left) noetherian;
\item the transition maps $A_{n}\from A_{n+1}$ are flat.
\end{itemize}
In order to prove Fréchet-Stein property for $\huq{sl}{2,L}$, we
consider the subalgebra $A\subset D\left(\uqk b{+,L},\uqk b{-,L}\right)$,
generated by $\left\{ K,M=K_{-}^{-1},E,F\right\} $. Clear that under
the quotient map $D\left(\uqk b{+,L},\uqk b{-,L}\right)\to\uqk{sl}{2,L}$
$A$ is mapped onto $\uqk{sl}{2,L}$. $A$ also gives rise to the
Banach subalgebras $A\left(R\right)\subset D\left(\uqr[L,+]bR,\uqr[L,-]bR\right)$
and a Fréchet subalgebra $\widehat{A}\subset D\left(\huq b{+,L},\huq b{-,L}\right)$
with $\widehat{A}\cong\lim_{\from}A\left(R\right)$ and we have a
topological isomorphism $\huq{sl}{2,L}\cong\widehat{A}/\left\langle KM-1\right\rangle $.
Thus in order to show that $\huq{sl}{2,L}$ is Fréchet-Stein, by lemma
\cite[Prop. 3.7]{ST33} it is enough to show that $\widehat{A}$ is
Fréchet-Stein.

Similar to lemma \ref{lem:The-double-ass-gr-noeth} one can show the
algebras $A\left(R\right)$ are (left and right) noetherian with associated
graded rings $Gr_{\cdot}A\left(R\right)\cong\bar{L}\left[\bar{\pi}^{\pm1}\right]\left[\bar{K},\bar{M}\right]\left[\tilde{F},\beta_{0},0\right]\left[\tilde{E},\beta_{1},0\right]$.
Thus we only need to check the second property.
\begin{prop}
Let $A\left(R_{1}\right)\hookrightarrow A\left(R_{2}\right)$ be the
inclusion map for $R_{1}>R_{2}$. Then $A\left(R_{2}\right)$ is a
flat $A\left(R_{1}\right)-$module.\end{prop}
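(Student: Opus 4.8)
The plan is to exhibit $A\left(R_{2}\right)$ as the completion of $A\left(R_{1}\right)$ with respect to a \emph{second}, coarser filtration on $A\left(R_{1}\right)$ --- not its own norm filtration, but the one induced by the target norm $\norm{\cdot}_{R_{2}}$ --- and then to invoke the flatness of completion for a Zariskian filtration whose associated graded ring is noetherian.

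First I would note that, because $R_{1}>R_{2}$, one has $\norm a_{R_{2}}\leq\norm a_{R_{1}}$ for every $a\in A\left(R_{1}\right)$, so $\norm{\cdot}_{R_{2}}$ restricts to a submultiplicative norm on $A\left(R_{1}\right)$; moreover the inclusion $A\left(R_{1}\right)\hookrightarrow A\left(R_{2}\right)$ has dense image for this norm, since the finite partial sums of any convergent series in $A\left(R_{2}\right)$ already lie in $A\left(R_{1}\right)$, while $A\left(R_{2}\right)$ is complete. Hence $A\left(R_{2}\right)$ is, as a filtered ring, the completion of $\left(A\left(R_{1}\right),\,\norm{\cdot}_{R_{2}}\right)$. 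Passing to a completion does not alter the associated graded ring, so $Gr_{\cdot}\left(A\left(R_{1}\right),\,\norm{\cdot}_{R_{2}}\right)\cong Gr_{\cdot}A\left(R_{2}\right)\cong\bar{L}\left[\bar{\pi}^{\pm1}\right]\left[\bar{K},\bar{M}\right]\left[\tilde{F},\beta_{0},0\right]\left[\tilde{E},\beta_{1},0\right]$, which is (left and right) noetherian by the same argument already used for the rings $A\left(R\right)$.

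Next I would verify that the $\norm{\cdot}_{R_{2}}$-filtration on $A\left(R_{1}\right)$ is Zariskian. Since the norm is solid and $L$ is discretely valued, its ``$\leq1$'' piece is $A\left(R_{1}\right)^{0}$ and its ``$\leq\left|\pi\right|_{L}$'' piece is $\pi A\left(R_{1}\right)^{0}$; for $x\in\pi A\left(R_{1}\right)^{0}$ and $a\in A\left(R_{1}\right)^{0}$ we have $\norm{ax}_{R_{2}}<1$, so $1+ax$ is a unit in $A\left(R_{1}\right)^{0}$ via the geometric series, i.e. $\pi A\left(R_{1}\right)^{0}$ lies in the Jacobson radical of $A\left(R_{1}\right)^{0}$. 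Together with the noetherianity of the associated graded ring (equivalently, of the Rees ring) established above, this is exactly the Zariskian hypothesis, and by the standard flatness theorem for such filtrations (the same mechanism underlying \cite[sec.~3]{ST33}) the completion $A\left(R_{2}\right)$ is faithfully flat --- in particular flat --- as a left and right $A\left(R_{1}\right)$-module.

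I expect the only delicate point to be the first step, the choice of filtration. Comparing $A\left(R_{1}\right)$ and $A\left(R_{2}\right)$ through their respective native norm filtrations does not work: the inclusion is not degree-preserving there (the principal symbols of $E$ and $F$ sit in degree $R_{1}$ on one side and in degree $R_{2}$ on the other), so the induced map on associated graded rings is far from flat. Viewing $A\left(R_{2}\right)$ instead as a completion of $A\left(R_{1}\right)$ in the coarser norm removes this difficulty. An alternative route, should the Zariskian argument need more care in the noncommutative setting, is to factor $A\left(R_{1}\right)\hookrightarrow A\left(R_{2}\right)$ through the skew-Tate extensions $\uqr[L,-]b{R}\left\{ E/R',\alpha_{1},\delta\right\}$, reduce to the one-variable case with $\delta=0$ (where Remark~\ref{rem:skew-tate_isom} and the Weierstrass division theorem of Section~2 become available) and check flatness of each elementary step; but this is longer.
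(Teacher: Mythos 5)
Your reduction to ``completion along a Zariskian filtration is flat'' has a genuine gap: the $\norm{\cdot}_{R_{2}}$-filtration on $A\left(R_{1}\right)$ is \emph{not} Zariskian, and the step where you check $F_{-1}\subseteq J\left(F_{0}\right)$ is exactly where it breaks. You invert $1+ax$ by a geometric series, but that series converges only in the $\norm{\cdot}_{R_{2}}$-topology, with respect to which $A\left(R_{1}\right)$ is not complete --- its completion is precisely $A\left(R_{2}\right)$, so the inverse you produce lives there and in general not in $A\left(R_{1}\right)$. Concretely, pick $c,c'\in L$ with $\left|c\right|_{L}=R_{2}^{-1}$ and $\left|c'\right|_{L}=\left|\pi\right|_{L}R_{2}^{-1}$; then $a=cE$ satisfies $\norm a_{R_{2}}=1$ and $x=c'E$ satisfies $\norm x_{R_{2}}=\left|\pi\right|_{L}$, yet the unique candidate inverse $\sum_{n}\left(cc'\right)^{n}E^{2n}$ of $1-ax$ has terms of $R_{1}$-norm $\left(\left|\pi\right|_{L}\left(R_{1}/R_{2}\right)^{2}\right)^{n}$, which diverges as soon as $R_{1}/R_{2}>\left|\pi\right|_{L}^{-1/2}$ (replacing $E$ by $E^{k}$ with $k$ large defeats any ratio $R_{1}>R_{2}$). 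So $F_{-1}$ is not contained in the Jacobson radical of $F_{0}$, the filtration is not Zariskian, and the flatness theorem you want to quote does not apply; note also that the Jacobson-radical hypothesis is essentially equivalent to faithful flatness of the completion, so your conclusion of \emph{faithful} flatness should itself have been a warning sign.

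The first part of your argument is fine ($A\left(R_{2}\right)$ is indeed the $\norm{\cdot}_{R_{2}}$-completion of $A\left(R_{1}\right)$, with the same noetherian associated graded ring), and your diagnosis of why the native filtrations cannot be compared directly is exactly right; the repair, however, has to be different. The paper follows \cite[4.8, 4.9]{ST33}: factor $A\left(R_{1}\right)\hookrightarrow A^{\infty}\left(R_{1}\right)\hookrightarrow A\left(R_{2}\right)$ through the algebra $A^{\infty}\left(R_{1}\right)$ of merely \emph{bounded} series. Each factor is then a map of \emph{complete} filtered rings with noetherian associated graded rings, so \cite[Prop. 1.2]{ST33} applies: for the first map the induced map of graded rings is the (flat) inclusion of skew polynomials into skew power series, and for the second it is a localization. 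Your alternative suggestion at the end --- factoring through one-variable skew-Tate extensions and using Weierstrass division --- is a plausible classical route, but as written it is only a remark and does not close the gap.
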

\begin{proof}
The proof follows the idea from \cite[4.8, 4.9]{ST33}. We view our
Banach algebras as complete filtered rings with the filtration induced
by norm. \cite[Prop. 1.2]{ST33} says, that the map between two such
rings is flat if associated graded rings are noetherian and the associated
map of graded rings is flat.

As in \cite[4.9]{ST33} we factor our map $A\left(R_{1}\right)\hookrightarrow A\left(R_{2}\right)$
into $A\left(R_{1}\right)\hookrightarrow A^{\infty}\left(R_{1}\right)\hookrightarrow A\left(R_{2}\right),$
where 
\[
A^{\infty}\left(R_{1}\right)=\left\{ {\displaystyle \sum_{\bar{n}=0}^{\infty}a_{\bar{n}}E^{n_{E}}K^{n_{K}}M^{n_{M}}F^{n_{F}}\Big|\, n_{\cdot}\in\mathbb{N};a_{\bar{n}}\in L:\sup_{\bar{n}}\left|a_{\bar{n}}\right|_{L}R^{n_{E}+n_{F}}\leq\infty}\right\} 
\]
is the algebra with the same relations as for $A\left(R_{1}\right)$.
$A^{\infty}\left(R_{1}\right)$ is a Banach algebra w.r.t. supremum
norm and $A\left(R_{1}\right)$ is a closed subalgebra. Easy to see
that the associated graded ring of $A^{\infty}\left(R_{1}\right)$
is the ring of formal skew-power series $Gr_{\cdot}A^{\infty}\left(R\right)\cong\bar{L}\left[\bar{\pi}^{\pm1}\right]\left[\left[\bar{K},\bar{M}\right]\right]\left[\left[\tilde{F},\beta_{0},0\right]\right]\left[\left[\tilde{E},\beta_{1},0\right]\right].$
Since both rings $Gr_{\cdot}A\left(R\right)$ and $Gr_{\cdot}A^{\infty}\left(R\right)$
are noetherian and inclusion of polynomials into power series is a
flat map, the inclusion $A\left(R_{1}\right)\hookrightarrow A^{\infty}\left(R_{1}\right)$
is flat.

For the second inclusion note that $A^{\infty}\left(R_{1}\right)\cong L\cten F_{0}A^{\infty}\left(R_{1}\right)$
and $A^{\infty}\left(R_{1}\right)\hookrightarrow A\left(R_{2}\right)$
is flat iff $F_{0}A^{\infty}\left(R_{1}\right)\hookrightarrow A\left(R_{2}\right)$
is flat. It follows from compactness of the inclusion map that $F_{0}A^{\infty}\left(R_{1}\right)$
is a closed subset of $A\left(R_{2}\right)$ and thus it is complete
w.r.t. the norm filtration of $A\left(R_{2}\right)$. So one can apply
\cite[Prop. 1.2]{ST33} in this case too. Similar to \cite[Thm. 4.9]{ST33}
one can show that the map of associated graded rings of $F_{0}A^{\infty}\left(R_{1}\right)$
and $A\left(R_{2}\right)$ is a localization and thus is flat. This
proves that the second inclusion is also flat.
\end{proof}
Thus we have proved that $\huq{sl}{2,L}$ is noncommutative and noncocommutative
Fréchet-Stein Hopf algebra. 
\begin{rem}
One can describe $\huq{sl}{2,L}$ as the space 
\[
\left\{ {\displaystyle \sum_{\bar{n}=0}^{\infty}a_{\bar{n}}E^{n_{E}}K^{n_{K}}F^{n_{F}}\Big|\;\begin{array}{c}
n_{E},n_{F}\in\mathbb{N},n_{K_{\pm}}\in\mathbb{Z};\: a_{\bar{n}}\in L,\mbox{ s.t. }\\
\forall R>0:\,\lim_{\left|\bar{n}\right|\to\infty}\left|a_{\bar{n}}\right|_{L}R^{n_{E}+n_{F}}=0
\end{array}}\right\} 
\]
with the locally convex topology given by the system of norms $\nu_{R}$:
\begin{equation}
\nu_{R}:\mbox{ }\nu_{R}\left(\sum_{\bar{n}=0}^{\infty}a_{\bar{n}}E^{n_{E}}K^{n_{K}}F^{n_{F}}\right)=\sup\left(\left|a_{\bar{n}}\right|_{L}R^{n_{E}+n_{F}}\right).\label{eq:norm1}
\end{equation}
 
\end{rem}
Equivalently, one can take the family of norms 
\begin{equation}
\nu_{R}':\mbox{ }\nu_{R}'\left(\sum_{\bar{n}=0}^{\infty}a_{\bar{n}}E^{n_{E}}K^{n_{K}}F^{n_{F}}\right)=\sup\left(\left|a_{\bar{n}}\right|_{L}\left|\left[n_{E}\right]_{q}!\right|_{L}\left|\left[n_{F}\right]_{q}!\right|_{L}R^{n_{E}+n_{F}}\right)\label{eq:norm2}
\end{equation}
 (similar to \cite[1.2.8]{Kohl}). This is possible due to an estimate
(4.1.1.1) from \cite{LDiv}, which implies that $\exists C\geq1:$
\[
\left|\frac{1}{\left[n\right]_{q}!}\right|_{L}\leq C^{n}p^{\frac{n}{p-1}}
\]
(note, that in \cite{LDiv}, $\left[n\right]_{q}=1+q+...+q^{n-1}=\left[\left[n\right]\right]_{q}$
in notations of \cite{KSch}, and one need to use $\left[n\right]_{q}=q^{-n+1}\left[\left[n\right]\right]_{q},$
\cite[6.1.1. (1.7)]{Kas}). The completion of $\huq{sl}{2,L}$ w.r.t.
$\nu_{R}'$ will be denoted by $\uqr[2,L]{sl}{\nu_{R}'}$.

\subsection{Second construction}

Instead of constructing $\uqr[2,L]{sl}R$ through quantum doubles,
one could use skew-Tate algebras one more time. Namely consider $\uqr[-,L]b{R_{K},\, R_{F}}\left[E,\alpha_{1},\delta\right]$,
with $\alpha_{1},\delta$ as in \ref{eq:1}. Since $\left|q\right|_{L}=1,$
$\norm{\alpha_{1}}=1.$ In order to apply lemma \ref{lem:Let-A-be},
we need $\left\Vert \delta\right\Vert \leq1.$ From formulas \ref{eq:1}
we see that $\left\Vert \delta\right\Vert \leq1$ if $\left|{\displaystyle \frac{1}{q-q^{-1}}}\right|_{L}R_{K}\leq R_{F}.$
So, under this condition, Gauss $R_{E}-$norm is a norm on $\uqr[-,L]b{R_{K},\, R_{F}}\left[E,\alpha_{1},\delta\right]$
and 
\[
\uqr[2,L]{sl}{R_{K},R_{F},R_{E}}:=\uqr[-,L]b{R_{K},\, R_{F}}\left\{ E/R_{E},\alpha_{1},\delta\right\} 
\]
 is a $K-$Banach algebra. 

Note that, due to symmetry between $F$ and $E,$ instead of condition
$\left|{\displaystyle \frac{1}{q-q^{-1}}}\right|_{L}R_{K}\leq R_{F},$
we can take $\left|{\displaystyle \frac{1}{q-q^{-1}}}\right|_{L}R_{K}\leq R_{E}$
(and first extend $\hat{A}_{0}$ by $E$ instead of $F$).

In case $R_{K}=1,$ $\uqr[2,L]{sl}{1,R_{F},R_{E}}$ is a Banach Hopf
algebra ($R_{F}$ or $R_{E}\geq\left|\left(q-q^{-1}\right)^{-1}\right|_{L}$),
isomorphic to $\uqr[2,L]{sl}{R_{F},R_{E}}$.

The projective limit ${\displaystyle \lim_{\from}\uqr[2,L]{sl}{R_{K},R_{F},R_{E}}}$
is a noncommutative Fréchet algebra. One can prove the Fréchet-Stein
property similarly to the previous section, although one has to keep
track of the relation between $R_{K}$ and $R_{F}$ (or $R_{E}$).
When $R_{K}=1$ is fixed, we get the same Fréchet Hopf algebra $\huq{sl}{2,L}$.

\section{Completion of the coordinate algebra $SL_{q}\left(2,L\right)$.}

\subsection{Preliminaries on quantum group $SL_{q}\left(2,L\right)$.}

The quantum matrix algebra $M_{q}(2,L)$ is a bialgebra, defined as
a quotient of free algebra $L\left\langle a,b,c,d\right\rangle $
by the following relations

\[
ab=qba,\mbox{ }ac=qca,\mbox{ }bd=qdb,\mbox{ }cd=qdc,\mbox{ }bc=cb,
\]
\[
ad-da=\left(q-q^{-1}\right)bc\mbox{ .}
\]

The comultiplication is given by formulas

\[
\begin{array}{cc}
\Delta\left(a\right)=a\otimes a+b\otimes c & \Delta\left(b\right)=a\otimes b+b\otimes d\\
\Delta\left(c\right)=c\otimes a+d\otimes c & \Delta\left(d\right)=c\otimes b+d\otimes d
\end{array}\mbox{ .}
\]

The counit is given by formula 
\[
\epsilon\left(\begin{array}{cc}
a & b\\
c & d
\end{array}\right)=\left(\begin{array}{cc}
1 & 0\\
0 & 1
\end{array}\right)\mbox{ .}
\]

The quantum determinant $det_{q}=ad-qbc$ is a central group-like
element in this algebra.

The quantum group $SL_{q}(2,L)$ is the quotient $SL_{q}(2,L)=M_{q}(2,L)$$\diagup\left(det_{q}=1\right)$. 

The set $\left\{ \! a^{n_{a}}b^{n_{b}}c^{n_{c}},\! b^{n_{b}}c^{n_{c}}d^{n_{d}}\,\right\} $
is a vector space basis for $SL_{q}(2,L)$.

$SL_{q}(2,L)$ is a Hopf algebra with the antipode 
\[
S\left(a\right)=d,\mbox{ }S\left(b\right)=-q^{-1}b,\mbox{ }S\left(c\right)=-qc,\mbox{ }S\left(d\right)=a\mbox{ .}
\]

The transposition morphism $\theta_{\alpha,\beta}$ is an automorphism
of $SL_{q}\left(2,L\right)$, given by the following formulas 
\[
\theta_{\alpha,\beta}\left(a\right)=\alpha a,\mbox{ }\theta_{\alpha,\beta}\left(b\right)=\beta c,\mbox{ }\theta_{\alpha,\beta}\left(c\right)=\beta^{-1}b,\mbox{ }\theta_{\alpha,\beta}\left(d\right)=\alpha^{-1}d\mbox{ .}
\]

\subsection{\label{sub:Duality-between-}Duality between $U_{q}\left(\mathfrak{sl}_{2,L}\right)$
and $SL_{q}(2,L)$.}

The algebra $\breve{U}_{q}\left(\mathfrak{sl}_{2,L}\right)$ \cite[3.1.2]{KSch}
has the same generators as $U_{q}\left(\mathfrak{sl}_{2,L}\right)$,
but different relations
\[
\begin{array}{c}
K\cdot K^{-1}=K^{-1}\cdot K=1\mbox{ ,}\\
KE=qEK\mbox{ ,}\\
KF=q^{-1}FK\mbox{ ,}\\
EF-FE={\displaystyle \frac{K^{2}-K^{-2}}{q-q^{-1}}}\mbox{ .}
\end{array}
\]
There is an injective algebra homomorphism 
\[
\begin{array}{c}
\phi:\end{array}\begin{array}{ccc}
U_{q}\left(\mathfrak{sl}_{2,L}\right) & \longrightarrow & \breve{U}_{q}\left(\mathfrak{sl}_{2,L}\right)\\
E & \longmapsto & EK\\
F & \longmapsto & K^{-1}F\\
K & \longmapsto & K^{2}
\end{array}.
\]
These two algebras are not isomorphic. Both algebras admit an automorphism
\[
\begin{array}{c}
\theta_{\alpha}:\end{array}\begin{array}{ccc}
E & \longmapsto & \alpha E\\
F & \longmapsto & \alpha^{-1}F\\
K & \longmapsto & K
\end{array}.
\]

There is a non-degenerate Hopf algebra pairing $\left\langle \cdot,\cdot\right\rangle $
between $U_{q}\left(\mathfrak{sl}_{2,L}\right)$ and $SL_{q}(2,L)$,
see \cite[I.4.4]{KSch}. First, define a pairing $\left\langle \cdot,\cdot\right\rangle \breve{}$
for $\breve{U}_{q}\left(\mathfrak{sl}_{2,L}\right)$ and $SL_{q}(2,L)$,
\cite[I.4.4 Prop.22]{KSch}:

\[
\begin{array}{c}
\left\langle K^{m}E^{n}F^{l},d^{s}c^{r}b^{t}\right\rangle \breve{}=q^{\left(n-r\right)^{2}}{\displaystyle \left[{s\atop n-r}\right]_{q^{2}}}\gamma_{mnt}^{srt}\\
\mbox{if }0\leq n-r=l-t\leq s\mbox{, }\left\langle K^{m}E^{n}F^{l},d^{s}c^{r}b^{t}\right\rangle \breve{}=0\mbox{ otherwise, and}\mbox{ }\\
\left\langle K^{m}E^{n}F^{l},a^{s}c^{r}b^{t}\right\rangle \breve{}=\delta_{rn}\delta_{tl}\gamma_{mnt}^{-srt}\mbox{ ,}
\end{array}
\]
where
\[
\gamma_{mnt}^{srt}=\frac{q^{m\left(s+r-t\right)/2}q^{-s\left(n+l\right)/2}}{q^{n\left(n-1\right)/2}q^{l\left(l-1\right)/2}}\frac{\left(q^{2};q^{2}\right)_{l}\left(q^{2};q^{2}\right)_{n}}{\left(1-q^{2}\right)^{l+n}}\mbox{ ,}
\]
\[
\left(a;q\right)_{n}=\left(1-a\right)\left(1-aq\right)\ldots\left(1-aq^{n-1}\right)\mbox{ .}
\]

The pairing $\left\langle \cdot,\cdot\right\rangle $ can be defined
via identity 
\[
\left\langle x,y\right\rangle =\left\langle \phi\left(x\right),\theta_{1,q^{-1/2}}\left(y\right)\right\rangle \breve{}=\left\langle \left(\phi\circ\theta_{q^{1/2}}\right)\left(x\right),y\right\rangle \breve{}\mbox{ .}
\]

\begin{rem}
\label{rem:Direct-check-shows}Direct check shows that if $\left|1-q\right|_{L}<1$,
then $\left|\gamma_{mnt}^{srt}\right|_{L}=1$ . For $L=\mathbb{Q}_{p}$
the condition $\left|1-q\right|_{L}<1$ corresponds to the case when
$q=\exp(h)$ for some $h\in\mathbb{Z}_{p}$ s.t. $\exp(h)$ converges,
i.e. the case when $SL_{q}(2,L)$ and $U_{q}\left(\mathfrak{sl}_{2,L}\right)$
are deformations of $SL(2,L)$ and $U\left(\mathfrak{sl}_{2,L}\right)$
respectively.
\end{rem}

\subsection{Completion of $SL_{q}(2,L).$ }

The pairing from \ref{sub:Duality-between-} gives the following pairing
between $\uqk{sl}{2,L}$ and $SL_{q}(2,L)$:
\[
\begin{array}{c}
\left\langle K^{m}E^{n}F^{l},d^{s}c^{r}b^{t}\right\rangle =q^{\left(n-r\right)^{2}}{\displaystyle \left[{s\atop n-r}\right]_{q^{2}}}\gamma_{\left(m+n-l\right)nt}^{srt}\cdot q^{-\left(\frac{n\left(n+1\right)}{2}+\frac{l\left(l-1\right)}{2}+ln\right)}\\
\mbox{if }0\leq n-r=l-t\leq s\mbox{, }
\end{array}
\]
\[
\left\langle K^{m}E^{n}F^{l},d^{s}c^{r}b^{t}\right\rangle =0\mbox{ otherwise, and}\mbox{ }
\]
\[
\left\langle K^{m}E^{n}F^{l},a^{s}c^{r}b^{t}\right\rangle =\delta_{rn}\delta_{tl}\gamma_{\left(m+n-l\right)nt}^{-srt}\cdot q^{-\left(\frac{n\left(n+1\right)}{2}+\frac{l\left(l-1\right)}{2}+ln\right)}\mbox{ .}
\]
 This pairing gives a linear inclusion of $SL_{q}(2,L)$ into the
linear dual of $\uqk{sl}{2,L}$. The norms $\nu_{R_{E},R_{F}}'$ (\ref{eq:norm2})
on $\huq{sl}{2,L}$ make $\uqk{sl}{2,L}$ into a normed space. If
elements of $SL_{q}(2,L)$ are continuous (=bounded) w.r.t. $\nu_{R_{E},R_{F}}'$,
then we have an embedding $SL_{q}(2,L)$ into the continuous dual
$\left(\uqr[2,L]{sl}{\nu_{R_{E},R_{F}}'}\right)'_{b}$ of $\uqr[2,L]{sl}{\nu_{R_{E},R_{F}}'}$
($\uqr[2,L]{sl}{\nu_{R_{E},R_{F}}'}$ is the completion of $\huq{sl}{2,L}$
w.r.t. $\nu_{R_{E},R_{F}}'$).

The norm of $\alpha\in SL_{q}(2,L)$ as of a functional on $\uqr[2,L]{sl}{\nu_{R_{E},R_{F}}'}$
is given by identity 
\[
\left\Vert \alpha\right\Vert _{\nu_{R_{E},R_{F}}'}^{*}=\sup_{x}\left|\left\langle x,\alpha\right\rangle \right|_{L},\mbox{ }x\in\uqr[2,L]{sl}{\nu_{R_{E},R_{F}}'}.:\nu_{R_{E},R_{F}}'\left(x\right)\leq1.
\]
Since (from \cite[2.1.1. (3)]{KSch})
\[
\left(q^{2},q^{2}\right)_{m}=\left[m\right]_{q}!\cdot\left(1-q^{2}\right)^{m}\cdot q^{\frac{m\left(m-1\right)}{2}},
\]
then in case $\left|q\right|_{L}=1$ one can check that 
\[
\left|\gamma_{xnt}^{srt}\right|_{L}=\left|\left[n\right]_{q}!\right|_{L}\left|\left[l\right]_{q}!\right|_{L}.
\]
 Thus 
\[
\left|\left\langle K^{m}E^{n}F^{l},a^{s}c^{r}b^{t}\right\rangle \right|_{L}=\delta_{rn}\delta_{tl}\left|\left[n\right]_{q}!\right|_{L}\left|\left[l\right]_{q}!\right|_{L}
\]
 and we have found the first norm 
\[
\norm{a^{s}c^{r}b^{t}}_{\nu_{R_{E},R_{F}}'}^{*}=R_{E}^{-r}R_{F}^{-t}.
\]
For elements of the form $d^{s}c^{r}b^{t}$ we have
\[
\left|\left\langle K^{m}\frac{E^{n}}{\left[n\right]_{q}!}\frac{F^{l}}{\left[l\right]_{q}!},d^{s}c^{r}b^{t}\right\rangle \right|_{L}=\left|\left[{s\atop n-r}\right]_{q^{2}}\right|_{L}\leq1
\]
 due to \cite[6.1.1 (1.8)]{Kas} and \cite[4.1.1.2]{LDiv}. Thus we
have found the second norm 
\[
\norm{d^{s}c^{r}b^{t}}_{\nu_{R_{E},R_{F}}'}^{*}=R_{E}^{-r}R_{F}^{-t}.
\]

Since $\left\{ a^{s}c^{r}b^{t},d^{s}c^{r}b^{t}\right\} $ is a linear
basis of $SL_{q}\left(2,L\right)$, we have an embedding $SL_{q}\left(2,L\right)\hookrightarrow\left(\uqr[2,L]{sl}{\nu_{R_{E},R_{F}}'}\right)'_{b}$.We
denote by $C_{R_{E},R_{F}}^{an}\left(SL_{q}\left(2,L\right)\right)$
the closure of $SL_{q}\left(2,L\right)$ inside $\left(\uqr[2,L]{sl}{\nu_{R_{E},R_{F}}'}\right)'_{b}$,
which consist of series $\sum\alpha_{nmk}a^{n}b^{m}c^{k}+\sum\beta_{mkl}b^{m}c^{k}d^{l}$
with 
\[
\lim_{n,m,k\to\infty}\left|\alpha_{nmk}\right|_{L}\left(\frac{1}{R_{E}}\right)^{k}\left(\frac{1}{R_{F}}\right)^{m}=0,
\]
\[
\lim_{m,k,l\to\infty}\left|\beta_{mkl}\right|_{L}\left(\frac{1}{R_{E}}\right)^{k}\left(\frac{1}{R_{F}}\right)^{m}=0.
\]
It is a Banach algebra w.r.t. supremum-$R_{E}^{-1},R_{F}^{-1}$ norm.
The comultiplication, counit and antipode are also bounded, so it
is a $L-$Banach Hopf algebra.

The injective limit $C^{\omega}\left(SL_{q}\left(2,L\right)\right)={\displaystyle \lim_{\to}C_{R_{E},R_{F}}^{an}\left(SL_{q}\left(2,L\right)\right)}$
is a LCVS of compact type (similar to \cite[16.11]{NFA}). So, $C^{\omega}\left(SL_{q}\left(2,L\right)\right)$
is a noncommutative and noncocommutative $L-$Hopf algebra of compact
type.
\begin{rem}
More generally, one can take $R_{a},$ $R_{b},$ $R_{c},$ $R_{d}<1$
and define a completion of $SL_{q}\left(2,L\right)$ in a similar
way. The norm will not be submultiplicative (i.e. multiplication is
not continuous w.r.t. supremum-norm), but comultiplication will be
bounded, so in this case we get a noncocommutative coalgebra. More
over, it is still can be done if $\left|q\right|_{L}<1$ and in this
case the Haar functional of $SL_{q}\left(2,L\right)$ \cite[4.2.6]{KSch}
is bounded.\end{rem}


\begin{thebibliography}{FVDP}
\bibitem[DV]{LDiv}L Di Vizio: Arithmetic theory of q-difference equations:
the q-analogue of Grothendieck-Katz's conjecture on p-curvatures.
Invent. Math. 150 (2002), no. 3, 517–578.

\bibitem[FVDP]{FVDP}J. Fresnel; M. van der Put: Rigid analytic geometry
and its applications. Progress in Mathematics, 218. Birkhäuser Boston,
Inc., Boston, MA, 2004.

\bibitem[Kas]{Kas}C. Kassel: Quantum groups. Graduate Texts in Mathematics,
155. Springer-Verlag, New York, 1995. 

\bibitem[Kohl]{Kohl}J. Kohlhaase: Invariant distributions on p-adic
analytic groups. Duke Math. J. 137 (2007), no. 1, 19–62.

\bibitem[KSch]{KSch}A. Klimyk; K. Schmüdgen: Quantum groups and their
representations. Texts and Monographs in Physics. Springer-Verlag,
Berlin, 1997.

\bibitem[L]{L}A. Lyubinin: $p$-adic quantum hyperenveloping algebras.
in preparation.

\bibitem[NFA]{NFA}P. Schneider: Nonarchimedean functional analysis.
Springer Monographs in Mathematics. Springer-Verlag, Berlin, 2002.

\bibitem[S]{S}Y. Soibelman: Quantum p-adic spaces and quantum p-adic
groups. Geometry and dynamics of groups and spaces, 697–719, Progr.
Math., 265, Birkhäuser, Basel, 2008. 

\bibitem[ST]{ST33}P. Schneider, J. Teitelbaum: Algebras of $p$-adic
distributions and admissible representations. Invent. Math. 153 (2003),
no. 1, 145–196.\end{thebibliography}
\end{document}